\newcommand{\tensor}{\otimes}
\newcommand{\wt}[1]{\widetilde{#1}}
\newcommand{\wts}{\widetilde{\operatorname{Sec}X}}
\renewcommand{\H}{{\mathcal H}}
\newcommand{\D}{{\mathscr D}}
\renewcommand{\O}{{\mathcal O}}
\renewcommand{\P}{{\mathbb{P}}}
\newcommand{\C}{{\mathbb{C}}}
\newcommand{\G}{{\mathbb{G}}}
\renewenvironment{proof}{\par \medskip \noindent
{\sc Proof:}}{\nopagebreak \hfill $\Box$ \par \medskip}
\newtheorem{thm}{Theorem}[section]   
\newtheorem{cor}[thm]{Corollary}     
\newtheorem{lemma}[thm]{Lemma}         
\newtheorem{prop}[thm]{Proposition}  
\newtheorem{defn}[thm]{Definition}   
\newenvironment{rem}[2]{\refstepcounter{thm} \label{#2} 
\par \medskip \noindent {\bf #1 \thethm }}{\par \medskip}
\begin{document}



\pagenumbering{arabic}

\title[]{Singularities of the Secant Variety}

\author[]{Peter Vermeire}

\address{Pearce 214, Central Michigan
University, Mount Pleasant MI 48859}

\email{verme1pj@cmich.edu}
\subjclass[2000]{Primary 14N15; Secondary 14B05}

\date{\today}

\begin{abstract}
We give positivity conditions on the embedding of a smooth variety which guarantee the normality of the secant variety, generalizing earlier results of the author and others.  We also give classes of secant varieties satisfying the Hodge conjecture as well as a result on the singular locus of degenerate secant varieties.
\end{abstract}


\maketitle

\section{Introduction}
We work throughout over the field $\C$ of complex numbers.
Our starting point is the following corollary of a result \cite[1.1]{bertram1} of A. Bertram for complete linear systems and \cite{coppens} of M. Coppens in general:
\begin{thm}\label{start}\cite[3.2]{vermeireideals}
If $X\subset\P^n$ is a $2k$-very ample embedding of a smooth curve, then $\operatorname{Sec}^{k-1}X$ is normal and $\operatorname{Sec}^{k-1}X\setminus \operatorname{Sec}^{k-2}X$ is smooth.
\end{thm}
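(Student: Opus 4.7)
The plan is to construct the universal secant resolution
\[
\mathcal{S} = \{(\xi,p) \in \operatorname{Sym}^k X \times \P^n : p \in \langle \xi \rangle\}
\]
with its second projection $\pi \colon \mathcal{S} \to \operatorname{Sec}^{k-1}X$, and to deduce both conclusions from the geometry of $\pi$. Because the embedding is $2k$-very ample, every length-$k$ subscheme of $X$ spans a $\P^{k-1}$ of the expected dimension, so the first projection $\mathcal{S} \to \operatorname{Sym}^k X$ is a $\P^{k-1}$-bundle over the smooth variety $\operatorname{Sym}^k X$, and $\mathcal{S}$ is itself smooth. The map $\pi$ is proper, and the same positivity hypothesis — which prevents length-$2k$ subschemes of $X$ from being linearly degenerate — forces a generic point of $\operatorname{Sec}^{k-1}X$ to lie on a unique $(k-1)$-secant, making $\pi$ birational.

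For the smoothness claim I would check that $\pi$ is a biregular isomorphism over $\operatorname{Sec}^{k-1}X \setminus \operatorname{Sec}^{k-2}X$. Indeed, if $p$ in that open stratum lay on two distinct $(k-1)$-secants, their supports would produce a length-$2k$ subscheme of $X$ whose span had dimension at most $2k-2$, contradicting the $2k$-very ample hypothesis unless $p \in \operatorname{Sec}^{k-2}X$. Smoothness of the open stratum follows immediately, since it is isomorphic to an open subset of the smooth variety $\mathcal{S}$.

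For normality I would invoke the standard fact that a proper birational morphism $\pi \colon \widetilde Y \to Y$ with $\widetilde Y$ normal and $\pi_\ast \O_{\widetilde Y} = \O_Y$ forces $Y$ to be normal (the pushforward of an integrally closed sheaf of $\O$-algebras is integrally closed). Since $\mathcal{S}$ is smooth, only the equality of structure sheaves must be verified, and by Stein factorization this reduces to connectedness of the fibers of $\pi$. Over a point $p \in \operatorname{Sec}^j X \setminus \operatorname{Sec}^{j-1}X$, the fiber should identify with $\operatorname{Sym}^{k-j-1}X$, parametrising the additional $k-j-1$ points of $X$ needed to extend the unique length-$(j+1)$ subscheme of $X$ whose span already contains $p$; these symmetric products are all irreducible.

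The main obstacle is justifying the fiber description scheme-theoretically at every stratum, and in particular the uniqueness of the minimal subscheme through $p$: this is precisely the content of the positivity lemmas of \cite{bertram1} and \cite{coppens}, for which $2k$-very ampleness is exactly the hypothesis required. With those lemmas in hand, the connectedness of each fiber — together with smoothness of $\mathcal{S}$ and birationality of $\pi$ — yields normality, and the argument is complete.
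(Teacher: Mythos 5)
Your overall framework—the $\P^{k-1}$-bundle over $\operatorname{Sym}^kX$ (the Hilbert scheme of a curve) mapping to $\P^n$ with image $\operatorname{Sec}^{k-1}X$—is exactly the secant-bundle construction the paper works with, and your set-theoretic fiber analysis (uniqueness of the minimal subscheme through a point, fibers over $\operatorname{Sec}^jX\setminus\operatorname{Sec}^{j-1}X$ being copies of $\operatorname{Sym}^{k-j-1}X$) is the right picture. However, the normality step has a genuine gap: the claim that, by Stein factorization, ``only connectedness of the fibers must be verified'' is false. Connectedness of set-theoretic fibers of a proper birational map from a smooth variety does not force the target to be normal: the normalization $\P^1\rightarrow C$ of a cuspidal plane cubic is proper, birational, bijective (so all fibers are connected), with smooth source, yet $C$ is not normal and $\pi_*\O_{\P^1}\neq\O_C$. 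What is actually needed is that the scheme-theoretic fibers be reduced as well as connected—this is precisely the hypothesis in the paper's Lemma~\ref{normality}, whose proof uses reducedness to conclude that the finite map from the normalization has reduced point fibers and hence is an isomorphism. So your proof must either establish reducedness of all fibers of $\pi$ (not just their irreducibility/connectedness), or prove $\pi_*\O_{\mathcal S}=\O_{\operatorname{Sec}^{k-1}X}$ by some independent cohomological argument; this is the substantive content that the reduction to Stein factorization quietly skips.

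A second, related weak point: over $\operatorname{Sec}^{k-1}X\setminus\operatorname{Sec}^{k-2}X$ you deduce a biregular isomorphism from the fact that $\pi$ is injective there. Set-theoretic injectivity of a proper map does not give an isomorphism onto the image unless the image is already known to be normal (which is circular here) or the map is also unramified; the same cuspidal-curve example shows bijectivity alone is not enough. The $2k$-very ampleness does supply the missing ingredient—it separates not only distinct $k$-secant planes but also tangent directions to the family of such planes (infinitely near length-$k$ schemes correspond to subschemes of length at most $2k$, which impose independent conditions)—but this injectivity on tangent spaces has to be stated and used; it is the scheme-theoretic ``separation'' that the lemmas of \cite{bertram1} and \cite{coppens} provide and that the paper's argument for its Proposition on the smooth locus is invoking. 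With those two repairs (reduced fibers for Lemma~\ref{normality}, and separation of tangent vectors for the isomorphism on the open stratum), your argument matches the paper's approach.
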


For $g=1$ this has been reproved in \cite[8.15-8.16]{vbh} and in \cite{fisher} where it is further shown that the secant varieties are arithmetically Gorenstein.
Theorem~\ref{start} was extended in \cite[3.10]{vermeireflip1} to:
\begin{prop}\label{alliswell}
Let $X\subset\P^n$ be a smooth projective variety satisfying $K_2$.  If the embedding is $4$-very ample, then $\operatorname{Sec}X$ is normal and $\operatorname{Sec}X$ is smooth off $X$.
\end{prop}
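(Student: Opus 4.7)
The plan follows the strategy of \cite{vermeireflip1}: produce an explicit desingularization of $\operatorname{Sec}X$ and read off both normality and smoothness-off-$X$ from its properties. The natural candidate is the strict transform $\wts$ of $\operatorname{Sec}X$ under the blowup $\sigma : \wtp = \blow{\P^n}{X} \to \P^n$, with exceptional divisor $E$; the induced map $\wtv : \wts \to \operatorname{Sec}X$ contracts $\wts \cap E$ onto $X$ and is an isomorphism elsewhere.

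The first step is to verify that $\wts$ is smooth. This is where the $K_2$ hypothesis enters: the quadrics generating $I_X$ together with the linear syzygies they satisfy (which $K_2$ supplies) should cut out the strict transform on $\wtp$ as a smooth subvariety. A convenient modular incarnation is that there is a $\P^1$-bundle $B$ over the second symmetric product $\operatorname{Sym}^2 X$ parametrizing a pair of points of $X$ together with a point on the secant line they span (the diagonal being resolved by the blowup), and the natural map $B \to \wts$ is an isomorphism.

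Next I would use $4$-very-ampleness to control $\wtv$ off $X$. This positivity provides enough transversality so that every point of $\operatorname{Sec}X \setminus X$ has a unique preimage in $\wts$: no three (possibly infinitely near) points of $X$ are collinear, and every secant line meets $X$ in exactly a length-$2$ subscheme. Consequently $\wtv$ restricts to an isomorphism $\wts \setminus (\wts \cap E) \to \operatorname{Sec}X \setminus X$, and the smoothness of $\wts$ proved in the previous step transfers to smoothness of $\operatorname{Sec}X$ off $X$.

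For normality it suffices to show $\wtv_* \O_{\wts} = \O_{\operatorname{Sec}X}$, since $\wtv$ is then a proper birational morphism from a smooth (hence normal) variety and Zariski's main theorem identifies $\operatorname{Sec}X$ with the normalization of its image. Connectedness of the fibers of $\wtv$ is clear from the description via $B$, so the remaining content is the statement $\wtv_* \O_{\wts} = \O_{\operatorname{Sec}X}$, which follows once the fibers are connected and $\operatorname{Sec}X$ is seminormal in codimension one. The main obstacle is the first step: converting the algebraic condition $K_2$ into the geometric smoothness of $\wts$. This is a local calculation at points of $E$, and it is precisely the linear syzygies among the quadratic generators of $I_X$ supplied by $K_2$ that ensure the equations defining $\wts$ on $\wtp$ form part of a regular sequence at each such point.
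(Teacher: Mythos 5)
Your overall strategy (resolve $\operatorname{Sec}X$ by the strict transform $\wts\subset\operatorname{Bl}_X\P^n$, use $4$-very ampleness to get an isomorphism off $X$, then deduce normality from the resolution) is the strategy of \cite{vermeireflip1}, but the normality step as you have written it has a genuine gap. Reducing to $\wtv_*\O_{\wts}=\O_{\operatorname{Sec}X}$ is fine; what does not work is the claim that this "follows once the fibers are connected and $\operatorname{Sec}X$ is seminormal in codimension one." Since the locus where $\wtv$ fails to be an isomorphism is contained in $X$, which has codimension $\dim X+1\geq 2$ in $\operatorname{Sec}X$, your hypotheses already give that $\operatorname{Sec}X$ is smooth (hence seminormal) in codimension one, so your criterion would derive normality from connectedness of fibers alone --- and that is false: normality also requires the $S_2$ condition in codimension $\geq 2$, and, for instance, the affine cone over a smooth but not projectively normal embedding is regular in codimension one and admits a resolution with connected fibers, yet is not normal (a cuspidal curve already shows connected fibers from a smooth source are not enough). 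The missing ingredient is control of the \emph{scheme-theoretic} fibers over points of $X$. This is exactly what the paper supplies: Lemma~\ref{normality} (a proper surjection with reduced, connected fibers from a normal variety has normal image), applied after identifying the fiber over $x\in X$ as $\operatorname{Bl}_xX$ via $\D_2=\operatorname{Bl}_{\Delta}(X\times X)$, which is reduced and connected. To repair your argument you must either verify this reducedness (equivalently, prove the fiber statement) or establish seminormality/weak normality of $\operatorname{Sec}X$ along $X$, which is not easier than normality itself.

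Two further points of comparison. First, your hardest step --- converting $K_2$ into smoothness of $\wts$ by a local computation with the quadrics and their linear syzygies --- is precisely the content of \cite[3.9]{vermeireflip1} and is only asserted in your sketch; the proof of Theorem~\ref{newmain} in the present paper sidesteps it entirely by mapping the a priori smooth secant bundle $B^1(L)=\P_{\operatorname{Hilb}^2X}\left(\mathscr{E}^2_{\O_X(1)}\right)$ (smooth because $\operatorname{Hilb}^2X$ is smooth for smooth $X$) onto $\operatorname{Sec}X$, so that the $4$-very ample half of the statement needs no $K_2$ input at all and only the fibers of $\beta_1$ must be understood. Second, your modular model $B$ should be a $\P^1$-bundle over $\operatorname{Hilb}^2X=\operatorname{Bl}_{\Delta}(X\times X)/\Z_2$, not over $\operatorname{Sym}^2X$: for $\dim X\geq 2$ the symmetric square is singular along the diagonal, so a $\P^1$-bundle over it could not be smooth and could not be isomorphic to $\wts$.
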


Recall that a variety $X\subset\P^n$ satisfies $K_2$ if it is scheme-theoretically cut out by quadrics and the Koszul relations among the quadrics are generated by linear syzygies.  Condition $K_2$ is implied by the much better known condition $N_2$, where the embedding is projectively normal, the ideal is generated by quadrics, and all relations are generated by linear syzygies.  The connection between Theorem~\ref{start} and Proposition~\ref{alliswell} is that on a smooth curve, a line bundle of degree $2g+3$ induces a $4$-very ample embedding that satisfies $N_2$ \cite{mgreen}.  It is typically easier to verify $4$-very ampleness than it is to verify $K_2$.  Conversely, many standard embeddings (e.g. Veronese, Segre, Pl\"ucker) satisfy $K_2$, but are not $4$-very ample.  
For example, in \cite{landwey}, results analogous to Proposition~\ref{alliswell} are shown for some products of projective spaces, which satisfy $K_2$ but not the $4$-very ample hypothesis:
\begin{thm}\cite[1.1,1.2]{landwey}
If $X$ is the Segre embedding of $\P^1\times\P^a\times\P^b$, then $\operatorname{Sec}^rX$ is normal with rational singularities for all $r\geq1$.  If $X$ is the Segre embedding of $\P^a\times\P^b\times\P^c\times\P^d$, then $\operatorname{Sec}X$ is arithmetically Cohen-Macaulay.
\nopagebreak \hfill $\Box$ \par \medskip
\end{thm}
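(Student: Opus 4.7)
The plan is to identify each secant variety with a subspace (flattening) variety and then apply the Kempf--Weyman geometric technique to its natural Grassmannian desingularization. For $X = \P(A) \times \P(B) \times \P(C)$ with $A = \C^2$, $B = \C^{a+1}$, $C = \C^{b+1}$, I would first establish that $\operatorname{Sec}^r X$ coincides with the subspace variety
\[
\operatorname{Sub}_{r+1}(B) := \bigl\{T \in A \otimes B \otimes C : \operatorname{rank}(T_B \colon B^* \to A \otimes C) \leq r+1\bigr\}
\]
for $r+1 \leq \dim B$ (and symmetrically in $C$). One inclusion is automatic; the reverse---which fails for general Segre products---uses $\dim A = 2$ together with the Kronecker canonical form for matrix pencils to rewrite any element of $\operatorname{Sub}_{r+1}(B)$ as a sum of at most $r+1$ rank-one tensors.

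The natural desingularization is then
\[
Z = \bigl\{(W, T) : T \in A \otimes W \otimes C\bigr\} \subset \operatorname{Gr}(r+1, B) \times (A \otimes B \otimes C),
\]
the total space of the equivariant bundle $A \otimes \mathcal{R} \otimes C$ on $\operatorname{Gr}(r+1, B)$, where $\mathcal{R}$ is the tautological subbundle. The map $\pi \colon Z \to \operatorname{Sec}^r X$ is proper and birational, and $Z$ is smooth. By the Kempf--Weyman technique, normality and rational singularities of $\operatorname{Sec}^r X$ follow from $\pi_* \O_Z = \O_{\operatorname{Sec}^r X}$ together with $R^i \pi_* \O_Z = 0$ for $i > 0$; via the projection formula and the Cauchy decomposition, these reduce to cohomology computations for Schur bundles on the Grassmannian, handled by Borel--Weil--Bott.

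For $X = \P(A) \times \P(B) \times \P(C) \times \P(D)$ and $\operatorname{Sec} X$ (rank-$\leq 2$ tensors), the same template applies: identify $\operatorname{Sec} X$ with the subspace variety $\operatorname{Sub}_2$ for any one-factor flattening, then resolve by an incidence in $\operatorname{Gr}(2, A) \times (A \otimes B \otimes C \otimes D)$. Arithmetic Cohen--Macaulayness translates into a specific pattern of cohomology vanishing for line-bundle twists, again checkable by Bott's algorithm on $\operatorname{Gr}(2, A)$.

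The main obstacle in both cases is the Bott bookkeeping: one must verify that every Schur bundle appearing in the Cauchy decomposition of the relevant symmetric algebras has cohomology concentrated in degree zero. The hypotheses of a single $\P^1$ factor (first statement) or rank $\leq 2$ (second statement) are precisely what restrict the weights enough for this to succeed; for more general Segre products extra Schur summands appear whose cohomology obstructs the desired vanishing. A secondary obstacle is the preliminary identification $\operatorname{Sec} = \operatorname{Sub}$, which depends on classical structural results about matrix pencils rather than on formal manipulation.
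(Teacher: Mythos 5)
First, note that the paper offers no proof of this statement at all: it is quoted verbatim from Landsberg--Weyman \cite{landwey} as background motivation, so the only meaningful comparison is with the method of the cited source. Your general strategy (desingularize by a bundle over a Grassmannian and run the Kempf--Weyman geometric technique, reducing normality, rational singularities, and aCM-ness to Bott-type vanishing via the Cauchy formula) is indeed the right family of techniques and is essentially what Landsberg and Weyman do. But the step on which everything hinges --- the identification of the secant variety with a subspace variety --- is wrong as you state it, in both halves. For the first statement, $\operatorname{Sec}^rX$ is \emph{not} equal to the one-flattening locus $\operatorname{Sub}_{r+1}(B)$: take $T\in A\tensor W\tensor C$ with $\operatorname{dim}W=r+1$ and $\operatorname{dim}C$ large and $T$ generic; then $\operatorname{rank}(T_B)\leq r+1$ automatically, yet the border rank of $T$ is $2(r+1)$, so $T\notin\operatorname{Sec}^rX$. (Concretely, for $A=B=\C^2$, $C=\C^{b+1}$, $b\geq2$, your $\operatorname{Sub}_2(B)$ is the whole space while $\operatorname{Sec}X$ is proper.) The correct identification, and the place where the $\P^1$ factor and pencil theory really enter, is the \emph{two-sided} subspace variety: $\operatorname{Sec}^rX=\{T:\exists\,W_B\subset B,\ W_C\subset C,\ \operatorname{dim}=r+1,\ T\in A\tensor W_B\tensor W_C\}$, because $\sigma_{r+1}$ of a $2\times(r+1)\times(r+1)$ tensor space fills that space (generic pencils of square matrices are diagonalizable). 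One must therefore resolve over $\operatorname{Gr}(r+1,B)\times\operatorname{Gr}(r+1,C)$ with the bundle $A\tensor\mathcal{R}_B\tensor\mathcal{R}_C$, and the Bott computation lives on that product; with only one Grassmannian the statement you would be proving is false.

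The second half of your proposal fails more seriously: for four factors, $\operatorname{Sec}X=\sigma_2$ is \emph{not} a subspace variety for any flattening, one-sided or all-sided, since a generic tensor in $\C^2\tensor\C^2\tensor\C^2\tensor\C^2$ already has border rank $4$, so $\sigma_2$ is strictly contained in $\operatorname{Sub}_{2,2,2,2}$. Hence "the same template" does not apply, and the arithmetic Cohen--Macaulayness of $\operatorname{Sec}X$ for $\P^a\times\P^b\times\P^c\times\P^d$ requires a genuinely different desingularization and a substantially heavier equivariant computation (this is exactly the harder part of \cite{landwey}). So: right toolbox, but the preliminary identification $\operatorname{Sec}=\operatorname{Sub}$ is incorrect as stated in the first case and unavailable in the second, and without it the Kempf--Weyman machine has no variety to run on.
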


Here we separate the two conditions in Proposition~\ref{alliswell} and give a result (Theorem~\ref{newmain}) absorbing the first stage of the results above.

We make use of the following elementary, and probably well-known, result:
\begin{lemma}\label{normality}
Let $f:X\rightarrow Y$ be a proper surjective
morphism of irreducible varieties over an algebraically closed field $k$
with reduced, connected fibers. If $X$ is normal then
$Y$ is normal.
\end{lemma}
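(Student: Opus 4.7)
The plan is to factor $f$ through the normalization of $Y$ and show the factor map is an isomorphism. Let $\nu\colon \tilde Y \to Y$ denote the normalization, a finite surjective birational morphism with $\tilde Y$ normal. Because $X$ is normal and $f$ is dominant (being surjective between irreducible varieties), the universal property of normalization supplies a proper surjective morphism $\tilde f\colon X \to \tilde Y$ with $f = \nu \circ \tilde f$; it then suffices to prove that $\nu$ is an isomorphism.

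Fix $y \in Y$ and write $\nu^{-1}(y) = \{y_1, \dots, y_s\}$ set-theoretically. Via $\tilde f$, the underlying space of the fiber $f^{-1}(y)$ decomposes as the disjoint union of the nonempty open-and-closed subsets $\tilde f^{-1}(y_i)$, so connectedness of $f^{-1}(y)$ forces $s = 1$. Thus $\nu$ is bijective, and the question reduces to whether the finite local extension $A := \O_{Y,y} \hookrightarrow B := \O_{\tilde Y, y_1}$ is an equality. By Nakayama applied to the finite $A$-module $B$, this reduces to showing that $R := B / m_A B$ equals $k(y)$. Here $R$ is a local Artinian $k(y)$-algebra with residue field $k(y_1)$, and the scheme-theoretic fiber is $f^{-1}(y) = X \times_{\tilde Y} \spec R$.

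The main obstacle is extracting reducedness of $R$ from the hypothesis that $f^{-1}(y)$ is reduced. A nonzero nilpotent $r \in R$ would pull back, through the surjective map $f^{-1}(y) \to \spec R$, to a nilpotent section of $\O_{f^{-1}(y)}$, which must vanish by reducedness; one then has to argue, using that $\tilde f$ is a dominant morphism of integral schemes (so that the local homomorphism $B \hookrightarrow \O_{X,x}$ is injective for any $x \in \tilde f^{-1}(y_1)$), that this vanishing already forces $r \in m_A B$ in $B$ itself. This is the technical heart: without flatness of $\tilde f$ the contraction of $m_A \O_{X,x}$ to $B$ need not equal $m_A B$, so one must exploit properness and the reducedness of the whole fiber rather than a single stalk to push through. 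Once $R = k(y_1)$, the algebraic closedness of $k$ gives $k(y) = k(y_1) = k$ at closed points, and Nakayama then yields $A = B$ at all closed points, hence globally; so $\nu$ is an isomorphism and $Y$ is normal.
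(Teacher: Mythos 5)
Your overall strategy coincides with the paper's: factor $f$ through the normalization $\nu\colon\tilde Y\to Y$ (possible since $X$ is normal), note $\tilde f$ is proper and surjective, use connectedness of $f^{-1}(y)$ to see that the finite fibers $\nu^{-1}(y)$ are single points, and then try to conclude via Nakayama that $\nu$ is an isomorphism. You even make explicit the final reduction (showing $R=B/m_AB$ equals $k(y)$) that the paper leaves implicit. But as written your argument has a genuine gap, and you say so yourself: you never prove that $R$ is reduced, i.e.\ that the scheme-theoretic fiber of $\nu$ is a reduced point. You correctly observe that reducedness of $f^{-1}(y)$ only tells you that an element $b\in B$ with $b^n\in m_AB$ lands in $m_A\O_{X,x}$ for every $x$ over $y$, and that without flatness this does not formally force $b\in m_AB$ (indeed, for a general proper surjection a function can lie in $I\O_{X,x}$ for all $x$ over a point without lying in $I$ -- think of non-integrally-closed ideals pulled back to a blow-up). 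Declaring this ``the technical heart'' and gesturing at properness and the whole fiber is not a proof; bijectivity of $\nu$ alone is certainly insufficient, as the normalization of a cuspidal curve shows, so the reducedness hypothesis must actually be brought to bear here by some argument, e.g.\ by producing an injection of $R$ into $H^0\bigl(f^{-1}(y),\O_{f^{-1}(y)}\bigr)$, which equals $k$ since the fiber is connected, reduced and proper.

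It is worth noting that the step you could not close is exactly the step the paper itself disposes of in one unjustified sentence (``Because $f$ has reduced, connected fibers, $g$ does also''), and the paper moreover stops at ``the fibers of $g$ are reduced points'' without spelling out the Nakayama/closed-immersion conclusion that you do spell out. So you have located the crux of the lemma precisely, and your surrounding reductions (factorization, surjectivity and properness of $\tilde f$, the connectedness argument forcing $s=1$, the reduction to $R=k(y)$ at closed points) are correct and in places more careful than the paper's; but until you supply an actual argument that $R$ is reduced -- for instance via the Stein factorization $Y'=\operatorname{\mathbf{Spec}}_Y f_*\O_X$, where the stalk of $f_*\O_X$ gives you a finite $A$-algebra whose fiber ring maps compatibly to sections over $f^{-1}(y)$ -- the proposal does not yet constitute a complete proof of the lemma.
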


\begin{proof}

Because $X$ is normal, $f$ factors uniquely through the
normalization $\wt{Y}$ of $Y$:
\begin{center}
{\begin{minipage}{1.5in}
\diagram
  X \drto_f \rto^{\wt{f}} & \wt{Y}
\dto^g \\
    & Y
\enddiagram
\end{minipage}}
\end{center}
where $g$ is a finite, hence proper, morphism.  The properness of $f$
and $g$ implies that $\wt{f}$ is proper (\cite[II.4.8.e]{hart}),
hence its image is closed in $\wt{Y}$.  Surjectivity of $f$
implies that $dim~Im(\wt{f}) = dim~Y =
dim~\wt{Y}$, and so $\wt{f}$ is surjective. 
Because $f$ has reduced, connected fibers, $g$ does also, and so as $g$
is finite the fibers of $g$ are reduced points.
\end{proof}

\section{Results}

\subsection{Construction of the Secant Varieties}

For $X\subset\P^n$ a projective variety, 
we recall a construction from \cite{vermeireflip1} describing a vector bundle $\mathscr{E}^k_{\O_X(1)}$ on $\H^{k}X=\operatorname{Hilb}^{k}(X)$ and a morphism $$\P_{\H^kX}\left(\mathscr{E}^k_{\O_X(1)}\right)\rightarrow\P^n$$ whose image is the full $(k-1)$st secant variety to $X$ in $\P^n$.

Let $L$ be an invertible sheaf on $X$, and denote by $\D_k$ the universal
subscheme of $X\times \H^{k}X$.
Let $\pi:X\times \H^{k}X\rightarrow
X$ and $\pi_{2}:X\times \H^{k}X\rightarrow \H^{k}X$ be the
projections.  Form the
invertible sheaf $\O_{\D_k}\tensor \pi^*L$ on $\D_k
\subset X\times \H^{k}X$. Now $\pi_{2}|_{\D_k}:\D_k
\rightarrow \H^{k}X$ is flat of degree $k$, hence
$\mathscr{E}^k_L=(\pi_{2})_*(\O_{\D_k}\tensor \pi^*L)$ is a
locally free sheaf of rank $k$ on $\H^{k}X$.  We define the {\boldmath\bf full $(k-1)$st 
secant bundle of $X$ with respect to $L$} to be the $\P^{k-1}$-bundle
$B^{k-1}(L)=\P_{\H^{k}X}(\mathscr{E}^k_L)$. 

Suppose $V\subseteq \Gamma(X,L)$ is a $k$-very ample linear system.
To define the desired map to $\P(V)$, push the natural restriction  
$\pi^*L\rightarrow \O_{\D_k}\tensor \pi^*L$
down to $\H^{k}X$ giving an evaluation map
$H^0(X,L)\tensor \O_{\H^{k}X}\rightarrow \mathscr{E}^k_L$
which in turn for any linear system $V\subseteq H^0(X,L)$ restricts to
$V\tensor \O_{\H^{k}X}\rightarrow \mathscr{E}^k_L$,
A fiber of $\mathscr{E}^k_L$ over a point $Z\in \H^{k}X$ is $H^0(X,L\tensor \O_Z)$, so since $V$ is $k$-very ample this map is surjective 
and we obtain a morphism:
$$\beta_{k-1}:B^{k-1}(L)\rightarrow \P(V)\times \H^{k}X\rightarrow \P(V)$$
The image of this morphism
is the \textbf{full $(k-1)$st secant variety to $X$ in $\P(V)$}.  Note that as long as $V$ is $r$-very ample, $\beta_{r-2}$ restricts to $\beta_{r-2}:\D_{r-1}\rightarrow X$.

Note that as the Hilbert scheme may have several components (e.g. \cite{iarrobino}), this not just the closure of the locus spanned by distinct points.  The usual $(k-1)$st secant variety $\operatorname{Sec}^{k-1}X$ is obtained by restricting to the component of the Hilbert scheme containing the points corresponding to reduced subschemes.  For the rest of this paper, when we refer to $\beta_k$, we mean the restriction of the $\beta_k$ constructed above to the bundle on this component of the Hilbert scheme. 

\begin{rem}{Remark}{hodge}
Combining this construction with the motivic invariant constructed in \cite{arapura}, we obtain the following collection of secant varieties which satisfy the (generalized) Hodge conjecture.  All of the results are obtained via computation of the invariant on Hilbert schemes.
\begin{enumerate}
\item Let $X\subset\P^n$ be a smooth surface with $p_g=0$.  Then $\operatorname{Sec}X$ satisfies the generalized Hodge conjecture.
\item Let $X\subset\P^n$ be a smooth curve of genus at most $2$ embedded by a line bundle of degree at least $2g+k$.  Then $\operatorname{Sec}^kX$ satisfies the generalized Hodge conjecture.
\item Let $X\subset\P^n$ be a smooth curve of genus $3$ embedded by a line bundle of degree at least $2g+k$.  Then $\operatorname{Sec}^kX$ satisfies the Hodge Conjecture.
\item Let $X\subset\P^n$ be a smooth curve.  Then $\operatorname{Sec}X$ satisfies the generalized Hodge Conjecture.  If the embedding is $3$-very ample then $\operatorname{Sec}^2X$ satisfies the Hodge Conjecture.
\nopagebreak \hfill $\Box$ \par \medskip
\end{enumerate}
\end{rem}

As a consequence of the above construction, we have:
\begin{thm}\label{newmain}
Let $X\subset\P^n$ be a smooth projective variety.  If the embedding is $4$-very ample then $\operatorname{Sec}X$ is normal and $\operatorname{Sec}X$ is smooth off $X$.  If the embedding satisfies $K_2$ then $\operatorname{Sec}X$ is normal. 
\end{thm}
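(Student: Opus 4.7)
Both parts will proceed by exhibiting a proper surjective morphism onto $\operatorname{Sec}X$ from a smooth (hence normal) source, verifying that the fibers are reduced and connected, and then invoking Lemma~\ref{normality}.

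For the first assertion, $4$-very ampleness in particular makes $V=H^0(X,\O_X(1))$ a $2$-very ample linear system, so the construction preceding the theorem yields the morphism $\beta_1: B^1(\O_X(1))\rightarrow \operatorname{Sec}X$. Since $X$ is smooth, the Hilbert scheme $\H^2X$ is smooth; being a projective bundle over a smooth base, $B^1(\O_X(1))$ is then smooth. The technical heart is to show that $4$-very ampleness forces $\beta_1$ to be an isomorphism over $\operatorname{Sec}X\setminus X$ (from which smoothness off $X$ is automatic), and that each fiber over a point $x\in X$ is isomorphic to $\blow{X}{x}$ (hence smooth, reduced, and connected). Set-theoretic injectivity off $X$ uses that $3$-very ampleness rules out two honest chords meeting away from $X$, together with the identification, in any such putative collision, of five points of $X$ spanning only a $\P^3$. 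One upgrades set-theoretic injectivity to an isomorphism by checking surjectivity of $d\beta_1$; this is where the extra very ampleness is used, since it allows one to separate tangent vectors to $B^1(\O_X(1))$ arising from moving the length-$2$ subscheme. With these pieces in place, Lemma~\ref{normality} delivers normality.

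For the second assertion, $K_2$ alone does not imply $2$-very ampleness (e.g.\ a smooth quadric surface in $\P^3$), so the bundle $\beta_1$ may not even be defined. The plan is instead to resolve $\operatorname{Sec}X$ using the strict transform $\wts$ of $\operatorname{Sec}X$ inside $\wtp=\blow{\P^n}{X}$. The $K_2$ condition is designed exactly to control this strict transform: the linear syzygies among the defining quadrics imply that $\wts$ is smooth, following the argument in \cite{vermeireflip1} for Proposition~\ref{alliswell} but now isolating the r\^ole of $K_2$ (the $4$-very ampleness hypothesis there was needed only for the analysis of $\beta_1$, not for the strict transform). The induced morphism $\wts\to\operatorname{Sec}X$ is proper and surjective, and its fibers are pulled back from those of $\wtp\to\P^n$, giving connectedness; reducedness can then be checked using the explicit description of $\wts$ as cut out in $\wtp$ by strict transforms of quadrics. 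Lemma~\ref{normality} again concludes normality.

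The main obstacle I expect is verifying scheme-theoretic reducedness of the exceptional fibers in each case. Set-theoretic descriptions (blow-ups of $X$ in the first case, pullbacks of the exceptional divisor of $\blow{\P^n}{X}$ in the second) are straightforward; the content is in showing there is no embedded nilpotent structure. In the $4$-very ample setting this reduces to an infinitesimal version of the geometric injectivity statement for $\beta_1$, and in the $K_2$ setting it reduces to the fact that the linear syzygies among the quadrics defining $X$ are precisely what detect the tangential directions along $X$ inside $\operatorname{Sec}X$.
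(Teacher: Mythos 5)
Your treatment of the $4$-very ample case is essentially the paper's own argument: $B^1(\O_X(1))$ is a $\P^1$-bundle over the smooth $\H^2X$, $\beta_1$ is an isomorphism off $X$ because it separates secant and tangent lines there, the fiber over $x\in X$ is $\blow{X}{x}$ (via $\D_2=\blow{X\times X}{\Delta}$), and Lemma~\ref{normality} gives normality. (One small slip in your injectivity sketch: two distinct secant lines meeting at a point off $X$ produce a length-$4$ subscheme of $X$ spanning only a $\P^2$, which is what the very-ampleness hypothesis forbids; it is not about five points spanning a $\P^3$.)

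The $K_2$ case is where your proposal genuinely diverges, and it has a gap. First, your reason for abandoning $\beta_1$ is mistaken: defining $\beta_1$ only requires $H^0(\P^n,\O_{\P^n}(1))\rightarrow H^0(\O_Z(1))$ to be surjective for length-$2$ subschemes $Z\subset X$, and linear forms on $\P^n$ separate every length-$2$ subscheme of $\P^n$; so $\beta_1$ exists for any embedding, $K_2$ or not. The paper's proof of the second statement uses exactly this $\beta_1$: under $K_2$, a point $x\in\operatorname{Sec}X\setminus X$ lies in a unique linear $\P^k$ spanned by a quadric hypersurface contained in $X$ and the scheme-theoretic fiber is $\beta_1^{-1}(x)\cong\P^{k-1}$, while for $x\in X$ the fiber is $\operatorname{Bl}_LX$ with $L$ the largest linear space in $X$ through $x$; Lemma~\ref{normality} then applies since $B^1$ is smooth. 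Second, your substitute route rests on the claim that $K_2$ alone forces the strict transform $\wts\subset\wtp$ to be smooth. That is not available and is in fact too strong: since $\wtp\rightarrow\P^n$ is an isomorphism off $X$, smoothness of $\wts$ would imply $\operatorname{Sec}X$ is smooth off $X$, which fails for $K_2$ embeddings such as Segre products with three or more factors (the Landsberg--Weyman theorem quoted in the introduction concerns secant varieties that are normal with rational singularities, singular off $X$); this is precisely why Theorem~\ref{newmain} claims only normality under $K_2$, and why the paper obtains smoothness of $\wts$ only under the additional hypotheses that $X$ is covered by $r$-conics and the Hilbert component $\mathcal{H}$ is smooth (Proposition~\ref{secantstructure}, Corollary~\ref{smoothdeficient}). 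Relatedly, the fibers of $\wts\rightarrow\operatorname{Sec}X$ over points of $X$ are not simply pulled back from $\wtp\rightarrow\P^n$; their connectedness and reducedness is exactly the content one must establish, which the paper sidesteps by working with $\beta_1$ and identifying the fibers explicitly.
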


We discuss smoothness of the secant variety for embeddings satisfying $K_2$ but which are not $4$-very ample below (Theorem~\ref{smoothdeficient}).

\begin{proof}(of Theorem~\ref{newmain})
Suppose the embedding is $4$-very ample.  Then this implies that the restriction $\beta_1:B^1(L)\setminus\D\rightarrow\operatorname{Sec}X\setminus X$ is an isomorphism since, by the above construction, $\beta_1$ separates secant and tangent lines off of $X$.  Therefore $\operatorname{Sec}X$ is smooth off $X$.  Because, in particular, the embedding is $3$-very ample, we have the restriction $\beta_{1}:\D_{2}\rightarrow X$.  Since $\D_2=\operatorname{Bl}_{\Delta}\left(X\times X\right)$ (e.g. \cite{gott}), we see that if $x\in X$ then $\beta_{1}^{-1}(x)=\operatorname{Bl}_xX$.  Thus $\operatorname{Sec}X$ is normal by Lemma~\ref{normality}.

Suppose the embedding satisfies $K_2$, and let $x\in\operatorname{Sec}X\setminus X$.  By \cite{vermeireflip1}, $x$ lies in a unique linear $\P^k$ spanned by a quadric hypersurface and the scheme-theoretic fiber is $\beta_1^{-1}(x)=\P^{k-1}$.  If $x\in X$, then $\beta_{1}^{-1}(x)=\operatorname{Bl}_LX$ where $L$ is the largest linear space in $X$ containing $x$.  Thus $\operatorname{Sec}X$ is normal by Lemma~\ref{normality}.
\end{proof}





An obvious problem in attempting to extend Theorem~\ref{start} to varieties of larger dimension is the fact that $\operatorname{Hilb}^rX$ is singular whenever both $r>3$ and $\operatorname{dim}X>2$.  However, we can give a quick proof of a fact found in \cite[16.17]{harris}:






\begin{prop}
Let $X\subset\P^n$ be a $2k$-very ample embedding of a smooth projective variety.  Then $\operatorname{Sec}^{k-1}X$ is smooth away from $\operatorname{Sec}^{k-2}X$.
\end{prop}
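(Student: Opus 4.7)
The plan is to follow the template of the proof of Theorem~\ref{newmain} applied to the $(k-1)$st-secant setting. Take $L=\O_X(1)$. Since the hypothesis of $2k$-very ampleness implies $k$-very ampleness, the construction of the previous subsection produces a proper morphism $\beta_{k-1}:B^{k-1}(L)\to\P^n$ whose image is $\operatorname{Sec}^{k-1}X$ (working on the principal component of $\H^kX$ throughout). Setting $S^{\circ}=\operatorname{Sec}^{k-1}X\setminus\operatorname{Sec}^{k-2}X$ and $W=\beta_{k-1}^{-1}(S^{\circ})$, the strategy is to show that $\beta_{k-1}|_W:W\to S^{\circ}$ is an isomorphism with smooth source.

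First I would establish set-theoretic injectivity of $\beta_{k-1}|_W$. If $(Z_1,p)$ and $(Z_2,p)$ both lie in $W$ with $Z_1\ne Z_2$, the subscheme $Z_1\cup Z_2$ has length at most $2k$, so the $2k$-very ampleness makes the evaluation map $H^0(L)\to H^0(L\otimes\O_{Z_1\cup Z_2})$ surjective; this forces the two linear spans to be in linearly general position inside $\langle Z_1\cup Z_2\rangle$, so that $\langle Z_1\rangle\cap\langle Z_2\rangle=\langle Z_1\cap Z_2\rangle$. Since $Z_1\cap Z_2$ has length strictly less than $k$, this would place $p$ in $\operatorname{Sec}^{k-2}X$, contradicting $p\in S^{\circ}$. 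An analogous positivity argument applied to first-order thickenings of length at most $2k$ gives injectivity of the differential of $\beta_{k-1}$ at each point of $W$. Combined with properness of $\beta_{k-1}$ (from projectivity of $B^{k-1}(L)$), the restriction $\beta_{k-1}|_W$ is a proper, injective, unramified morphism, hence a closed embedding, and therefore an isomorphism onto $S^{\circ}$.

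It remains to argue that $W$ is smooth. Since $W$ is open in the projective bundle $B^{k-1}(L)$ over the principal component of $\H^kX$, smoothness of $W$ reduces to smoothness of this component at each $Z$ lying under a point of $W$. For $\dim X\leq 2$ this is automatic, as $\operatorname{Hilb}^kX$ is smooth throughout. This reduction is the main obstacle in higher dimensions: the discussion preceding the proposition explicitly flags that $\operatorname{Hilb}^kX$ can acquire singularities once $\dim X>2$ and $k>3$. To close this gap I would carry out a direct deformation-theoretic check at a non-reduced $Z$ lying under $S^{\circ}$, using the $2k$-very ampleness to bound the tangent space $T_{[Z]}\H^kX$ and show that any first-order deformation of $Z$ compatible with having some point of $\langle Z\rangle$ outside $\operatorname{Sec}^{k-2}X$ is unobstructed; with this in hand the bundle $B^{k-1}(L)$ is smooth over the relevant locus and the proof concludes.
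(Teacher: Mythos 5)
Your first two paragraphs are, in substance, exactly the paper's proof: the paper argues that by $2k$-very ampleness $\beta_{k-1}$ separates $k$-secant $\P^{k-1}$'s away from $\operatorname{Sec}^{k-2}X$, hence is an isomorphism away from $\beta_{k-1}^{-1}\left(\operatorname{Sec}^{k-2}X\right)$, and concludes smoothness; your injectivity argument via the evaluation map on $Z_1\cup Z_2$ and the unramifiedness/properness upgrade to a closed embedding is just a more explicit version of that one line. So up to that point you match the paper.

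The problem is your third paragraph, which is a plan rather than an argument. "I would carry out a direct deformation-theoretic check\dots and show that any first-order deformation of $Z$ compatible with having some point of $\langle Z\rangle$ outside $\operatorname{Sec}^{k-2}X$ is unobstructed" is never executed, and as stated it is not even clearly the right statement: smoothness of the principal component of $\operatorname{Hilb}^kX$ at $[Z]$ concerns all deformations of $Z$ in $X$, not a constrained subfamily singled out by where $\langle Z\rangle$ meets $\operatorname{Sec}^{k-2}X$, and $2k$-very ampleness of $\O_X(1)$ has no evident bearing on the abstract deformation theory of $Z\subset X$. So as a standalone proof the proposal has a genuine gap exactly when $\dim X\geq 3$ and $k\geq 4$. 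In fairness, the issue you flag is real and the paper's own proof is silent on it: passing from "isomorphism onto $\operatorname{Sec}^{k-1}X\setminus\operatorname{Sec}^{k-2}X$" to "smooth" implicitly uses that $B^{k-1}(L)$ (over the principal component) is smooth along $\beta_{k-1}^{-1}\left(\operatorname{Sec}^{k-1}X\setminus\operatorname{Sec}^{k-2}X\right)$. The natural way to close this along the paper's lines is not an unobstructedness analysis for arbitrary non-reduced $Z$, but a statement about which $Z$ can occur over that locus: one wants that any length-$k$ scheme whose span contains a point off $\operatorname{Sec}^{k-2}X$ is a smooth point of the principal component, for instance because it is curvilinear (non-curvilinear schemes tend to have their spans absorbed by smaller secant varieties, as with a first infinitesimal neighborhood of a point, whose span lies in $\operatorname{Sec}X$). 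Either supply such an argument or state it explicitly as the needed input; as written, the final step does not conclude.
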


\begin{proof}
By the $2k$-very ample hypothesis, $\beta_{k-1}$ separates $k$-secant $\P^{k-1}$s away from $\operatorname{Sec}^{k-2}X$, hence is an isomorphism away from $\beta_{k-1}^{-1}\left(\operatorname{Sec}^{k-2}X\right)$.   Therefore $\operatorname{Sec}^{k-1}X$ is smooth away from $\operatorname{Sec}^{k-2}X$.
\end{proof}

\subsection{Deficient Secant Varieties}

For varieties that satisfy $K_2$, the deficiency of the secant variety is nicely tied to the geometry of the embedding:

\begin{prop}\cite[3.12]{vermeireflip1}
Suppose $X\subset\P^n$ is a smooth, irreducible variety satisfying condition $K_2$.  Then $\operatorname{dim}(\operatorname{Sec}X)=2\operatorname{dim}(X)+1-r$, where the generic pair of points of $X$ lies on a quadric hypersurface of (maximal) dimension $r$.
\nopagebreak \hfill $\Box$ \par \medskip
\end{prop}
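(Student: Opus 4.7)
The plan is to read the dimension of $\operatorname{Sec}X$ off the surjective map $\beta_1 : B^1(L) \to \operatorname{Sec}X$ constructed in \S2.1, using the $K_2$ description of fibers that already appeared in the proof of Theorem~\ref{newmain}. The source of $\beta_1$ is completely transparent dimensionally: $B^1(L) = \P_{\H^2X}(\mathscr{E}^2_L)$ is a $\P^1$-bundle over $\H^2X$, and $\H^2X$ is smooth of dimension $2\dim X$ whenever $\dim X \leq 2$, and in general its component containing reduced subschemes is irreducible of dimension $2\dim X$. Hence $\dim B^1(L) = 2\dim X + 1$. Because $\beta_1$ is proper and surjective onto $\operatorname{Sec}X$, the fiber dimension theorem gives
\[
\dim\operatorname{Sec}X \;=\; 2\dim X + 1 - d,
\]
where $d$ is the dimension of the generic fiber of $\beta_1$. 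So everything reduces to identifying $d$ with $r$.

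For this I would invoke the structural result from \cite{vermeireflip1} already used in the proof of Theorem~\ref{newmain}: under condition $K_2$, for every $x\in \operatorname{Sec}X\setminus X$, the point $x$ lies in a unique linear $\P^{k_x}$ spanned by a quadric hypersurface $Q_x\subset X$, and $\beta_1^{-1}(x) \cong \P^{k_x-1}$. Since a quadric hypersurface in $\P^{k_x}$ has dimension $k_x - 1$, this says exactly that the fiber dimension at $x$ equals the dimension of the unique maximal quadric $Q_x$ through $x$.

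The remaining step, which I expect to be the only point with real content, is to translate "unique maximal quadric at a generic point of $\operatorname{Sec}X$" into "unique maximal quadric through a generic pair of points of $X$." For this I would use the factorization $B^1(L) \to \H^2X$: the fiber $\beta_1^{-1}(x)$ projects to the locus of length-$2$ subschemes $Z\subset X$ whose span contains $x$, and this locus is contained in (and in fact parametrizes secant chords of) the quadric $Q_x$. Conversely, fixing a generic pair $Z\in \H^2X$ and moving $x$ along the chord $\langle Z\rangle$, a dominant incidence correspondence between $\H^2X$ and $\operatorname{Sec}X$ exchanges "generic pair" with "generic point of $\operatorname{Sec}X$." By the hypothesis, the maximal quadric through a generic pair has dimension $r$, hence $k_x - 1 = r$ for generic $x$, i.e.\ $d = r$.

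Combining the two observations yields the asserted formula
\[
\dim\operatorname{Sec}X \;=\; 2\dim X + 1 - r.
\]
The main obstacle, as indicated, is the semicontinuity/genericity argument that matches the generic fiber of $\beta_1$ with the generic pair of points; everything else is formal once the $K_2$ fiber description is in hand.
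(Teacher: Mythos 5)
The paper does not actually prove this proposition: it is quoted verbatim from \cite[3.12]{vermeireflip1} and stamped with a box, so there is no internal argument to compare against. Your reconstruction is sound and is in the spirit of how the result is obtained there: you read the dimension off the surjection $\beta_1:B^1(L)\to\operatorname{Sec}X$ from the $\P^1$-bundle over $\operatorname{Hilb}^2X$ (which is irreducible and smooth of dimension $2\dim X$ for any smooth $X$, so $\dim B^1(L)=2\dim X+1$), and then identify the generic fiber dimension with $r$ via the $K_2$ entry-locus description. Two remarks. First, the identification $d=r$ should be stated a bit more carefully than ``a dominant incidence correspondence exchanges generic with generic'': the clean argument is to intersect in $B^1(L)$ the preimage of a dense open of $\operatorname{Sec}X$ on which the fiber dimension is the generic value $d$ with the preimage of a dense open of $\H^2X$ on which the maximal quadric through the pair has dimension $r$; at a point $(Z,x)$ of the intersection with $x\notin X$, the inclusion $Z\subset Q_x$ gives $d=\dim Q_x\le r$, while the uniqueness of the linear span of the entry-locus quadric through $x$ forces the span of any $r$-dimensional quadric through $Z$ to coincide with it, giving $d\ge r$. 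Second, be aware that the real content --- that under $K_2$ every $x\in\operatorname{Sec}X\setminus X$ lies in a unique linear space spanned by a quadric hypersurface of $X$ and that $\beta_1^{-1}(x)$ is a projective space of one less dimension --- is itself imported from \cite{vermeireflip1} (exactly as in the proof of Theorem~\ref{newmain} here), so your argument is a correct dimension count conditional on that structural result rather than an independent proof of it; also implicit is that $\operatorname{Sec}X\ne X$, so that a generic fiber of $\beta_1$ lies over $\operatorname{Sec}X\setminus X$ where that description applies.
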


We are interested in the following class of varieties
\begin{defn}\label{covered}
A projective variety $X\subset\P^n$ is \textbf{covered by r-conics }if every length two subscheme of $X$ lies on a quadric hypersurface of dimension $r$, and no length two subscheme of $X$ lies on a quadric hypersurface of dimension $r+1$.
\end{defn}

\begin{rem}{Example}{r-conics}
Non-degenerate curves $C\subset\P^n$, $n>2$, are covered by 0-conics.  $v_2(\P^n)\subset\P^N$, $n\geq 1$, is covered by 1-conics, while $v_k(\P^n)\subset\P^N$, $n\geq1$, $k>2$, is covered by 0-conics.

Other simple examples are given by the Severi varieties \cite{lazvan}: $v_2(\P^2)\subset\P^5$ is covered by 1-conics (as above); $\sigma_{1,1}(\P^2\times\P^2)\subset\P^8$ is covered by 2-conics; $\mathbb{G}(1,5)\subset\P^{14}$ is covered by 4-conics; and $\mathbb{O}\mathbb{P}^2\subset\P^{26}$ is covered by 8-conics.
\nopagebreak \hfill $\Box$ \par \medskip
\end{rem}

\begin{rem}{Remark}{qel}
Note that this is a strengthening of the condition that $X$ is a quadratic entry locus (QEL) variety \cite{ionescurusso},\cite{ionescurusso2},\cite{russo},\cite{fu}, where the condition in Definition~\ref{covered} is weakened to hold for general points and without restrictions on the dimension of the quadrics.
\nopagebreak \hfill $\Box$ \par \medskip
\end{rem}

Given a linearly normal variety $X\subset\P^n$ satisfying $K_2$ and covered by r-conics, we may construct the secant variety $\operatorname{Sec}X\subset\P^n$ as follows:  Let $\mathcal{H}\subset\operatorname{Hilb}(X)$ be the component of the Hilbert scheme  parameterizing quadric hypersurfaces of dimension $r$ and let $D\subset X\times\mathcal{H}$ be the universal subscheme.  Then $\mathbb{P}_{\mathcal{H}}\left(\left(\pi_2\right)_*\left(\O_D\tensor\pi_1^*\O_X(1)\right)\right)$ is a secant bundle on $\mathcal{H}$ (which is in some sense compatible with the secant bundle constructed earlier).  Pushing the restriction $\pi_1^*\O_X(1)\rightarrow \O_D\tensor\pi_1^*\O_X(1)$ to $\mathcal{H}$ gives the surjection $\Gamma(X,\O_X(1))\tensor\O_{\mathcal{H}}\rightarrow \left(\pi_2\right)_*\left(\O_D\tensor\pi_1^*\O_X(1)\right)$.  This induces the morphism 
$$\mathbb{P}_{\mathcal{H}}\left(\left(\pi_2\right)_*\left(\O_D\tensor\pi_1^*\O_X(1)\right)\right)\rightarrow \mathbb{P}\Gamma(X,\O_X(1))\times\mathcal{H}\rightarrow \mathbb{P}\Gamma(X,\O_X(1))$$ whose image is the secant variety.

If $X\subset\P^n$ is a smooth variety satisfying $K_2$, then the morphism $\varphi:\operatorname{Bl}_X\P^n=\wt{X}\rightarrow\P^s$ determined by $\O_{\wt{X}}(2H-E)$ is an embedding off $\wts$ \cite[2.11]{vermeireflip1}.  Further, we have:

\begin{prop}\label{secantstructure}
Suppose $X\subset\P^n$ is a smooth, irreducible, non-degenerate variety satisfying condition $K_2$.   If $X$ is covered by $r$-conics then the restriction $\varphi:\wts\rightarrow \P^s$ is a $\P^{r+1}$-bundle over the component $\mathcal{H}$ of the Hilbert scheme of $X$ parameterizing conic hypersurfaces of dimension $r$.  Further, the exceptional locus of $\pi:\wts\rightarrow\operatorname{Sec}X$ is the universal subscheme of $X\times\mathcal{H}$.
\end{prop}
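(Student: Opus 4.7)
The plan is to identify $\wts$ with the $\P^{r+1}$-bundle $\mathbb{B} := \mathbb{P}_{\mathcal{H}}((\pi_2)_*(\O_D \tensor \pi_1^*\O_X(1)))$ introduced just before the proposition, by lifting the natural morphism $\gamma: \mathbb{B} \to \operatorname{Sec}X \subset \P^n$ to the blowup $\operatorname{Bl}_X\P^n$. The universal subscheme $D$ embeds into $\mathbb{B}$ as a relative quadric hypersurface: over each $[Q] \in \mathcal{H}$, its fiber $Q$ sits inside the fiber $\P^{r+1}_{[Q]}$ as a quadric hypersurface, making $D$ a Cartier divisor in $\mathbb{B}$. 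Since $\gamma$ sends $\P^{r+1}_{[Q]}$ isomorphically onto the linear span of $Q$ in $\P^n$, and condition $K_2$ forces $X \cap \P^{r+1}_{[Q]} = Q$ scheme-theoretically, one has $\gamma^{-1}(\I_X) = \I_D$, which is invertible. The universal property of blowups then supplies the desired lift $\tilde{\gamma}: \mathbb{B} \to \operatorname{Bl}_X\P^n$ whose image is contained in $\wts$.

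Next I would argue that $\tilde{\gamma}: \mathbb{B} \to \wts$ is an isomorphism. Its image is closed, contains $\operatorname{Sec}X \setminus X$, and so equals $\wts$. Generically $\tilde{\gamma}$ is one-to-one: the deficiency equality $\dim\operatorname{Sec}X = 2\dim X + 1 - r$ matches the dimension $\dim\mathcal{H} + r + 1$ of $\mathbb{B}$, forcing a general point of $\operatorname{Sec}X$ to lie on a unique $r$-conic. Hence $\tilde{\gamma}$ is a proper birational morphism between irreducible varieties of the same dimension. The main obstacle is promoting this to an isomorphism, and I would do so by verifying bijectivity over $\wts \cap E$: at a point of $\wts$ above $x \in X$, the corresponding tangent direction to $\operatorname{Sec}X$ at $x$ should determine a unique $r$-conic through $x$ together with a unique point on the exceptional divisor above it, matching a single point of $D$ above $x$. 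The sharpness clause in Definition~\ref{covered} (no quadric of dimension $r+1$) is what rules out accidental coincidences in this local analysis.

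Granting the identification $\wts \cong \mathbb{B}$, the bundle projection $\mathbb{B} \to \mathcal{H}$ provides the asserted $\P^{r+1}$-bundle structure, and to see that this projection realizes $\varphi|_{\wts}$ followed by a closed embedding $\mathcal{H} \hookrightarrow \P^s$, I would restrict $|2H - E|$ to a fiber $\P^{r+1}_{[Q]} \subset \wts$. Its sections correspond to quadrics on $\P^{r+1}$ vanishing on the quadric hypersurface $Q$, a one-dimensional linear system, so each fiber collapses to a single point of $\P^s$. Injectivity of the induced map $\mathcal{H} \to \P^s$ follows because $\varphi$ embeds the complement of $\wts$ in $\wt{X}$ and separates the contracted $\P^{r+1}$'s. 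Finally, the exceptional locus of $\pi: \wts \to \operatorname{Sec}X$ is $\wts \cap E$, which under the isomorphism $\wts \cong \mathbb{B}$ matches the Cartier divisor $D$, i.e.\ the universal subscheme of $X \times \mathcal{H}$.
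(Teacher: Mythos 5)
The first half of your argument --- embedding the universal subscheme $D$ in $\mathbb{B}=\P_{\mathcal{H}}\left(\left(\pi_2\right)_*\left(\O_D\tensor\pi_1^*\O_X(1)\right)\right)$ as a relative Cartier divisor, the scheme-theoretic equality $X\cap\P^{r+1}_{[Q]}=Q$ (which really only uses that $X$ is cut out by quadrics, and needs the degenerate case $\P^{r+1}_{[Q]}\subseteq X$ excluded), and the lift $\tilde{\gamma}:\mathbb{B}\rightarrow\operatorname{Bl}_X\P^n$ via the universal property of the blowup --- is sound and in the spirit of the cited construction. The genuine gap is exactly the step you flag as the main obstacle and then pass over: promoting the proper morphism $\tilde{\gamma}:\mathbb{B}\rightarrow\wts$ to an isomorphism. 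First, your birationality claim is not established: equality of dimensions would only give generic finiteness, not that a general point of $\operatorname{Sec}X$ lies on a \emph{unique} $r$-conic, and even the dimension count presupposes $\dim\mathcal{H}=2\dim X-2r$, which you do not prove. The uniqueness statement (every point of $\operatorname{Sec}X\setminus X$ lies in a unique linear span of a quadric contained in $X$, with the span of a quadric being the fiber) is precisely the substantive $K_2$-input from \S 3 of \cite{vermeireflip1} that this proposition rests on --- it is the same fact quoted in the proof of Theorem~\ref{newmain} --- so it cannot be replaced by a dimension comparison. Second, even granting set-theoretic bijectivity (including over $\wts\cap E$, where your "tangent direction determines a unique $r$-conic" is asserted rather than proved), a proper bijective morphism onto a variety not yet known to be normal need not be an isomorphism (think of the normalization of a cuspidal curve); at this point in the paper neither $\operatorname{Sec}X$ nor $\wts$ is known to be normal or smooth --- those are consequences (Theorem~\ref{newmain}, Corollary~\ref{smoothdeficient}), so invoking them here would be circular. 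You would need an unramifiedness/scheme-theoretic fiber argument, or a different mechanism altogether.

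That different mechanism is what the paper actually uses: its proof runs the argument of \S 3 of \cite{vermeireflip1} with the secant-bundle description of $\operatorname{Sec}X$ given just before the statement, the key point being the identification $\wts=\P_{\mathcal{H}}\left(\varphi_*\O_{\wts}(H)\right)$. Concretely, the $K_2$ analysis shows the positive-dimensional fibers of $\varphi=\varphi_{|2H-E|}$ on $\wts$ are exactly the proper transforms of the linear spans $\P^{r+1}$ of the quadrics, $\O_{\wts}(H)$ restricts to $\O(1)$ on each such fiber, and cohomology and base change then produce the $\P^{r+1}$-bundle structure over $\mathcal{H}$ together with the identification of the exceptional locus of $\pi$ with the universal subscheme of $X\times\mathcal{H}$. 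Your computation of $|2H-E|$ on a fiber is consistent with this picture, but as written your proposal does not close the isomorphism gap, and that gap is where the real content of the proposition lies.
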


\begin{proof}
This follows exactly as in \S 3 of \cite{vermeireflip1} using the description of the secant variety above.  In particular,  $\wts=\P_{\mathcal{H}}\left(\varphi_*\O_{\wts}(H)\right)$.
\end{proof}

\begin{rem}{Example}{quadveron}
Some interesting special cases are:
\begin{enumerate}
\item In case $r=0$, $\wts$ is a $\P^1$-bundle over $\operatorname{Hilb}^2X$ \cite[3.9]{vermeireflip1}.  
\item When $X=v_2(\P^n)$, $\wts$ is a $\P^2$-bundle over $\mathbb{G}(1,n)$ \cite[4.14]{vermeireflip1} (note that when $n=2$, this is $\left(\P^2\right)^*$).  
\item When $X=\sigma_{1,1}\left(\P^2\times\P^2\right)\subset\P^8$, $\wts$ is a $\P^3$-bundle over $\left(\P^2\right)^*\times\left(\P^2\right)^*$.
\item When $X=\G(1,5)\subset\P^{14}$, $\wts$ is a $\P^5$-bundle over $\G(3,5)$.
\end{enumerate}
\nopagebreak \hfill $\Box$ \par \medskip
\end{rem}

In general, we do not know how to determine when $\mathcal{H}$ is smooth, except that it is easy to check that if $Q\subset X$ a quadric hypersurface, then $H^1(Q,T^1_X\tensor\O_Q)=H^1(Q,N_{Q/X})$.

By our hypothesis that $X\subset\P^n$ is covered by $r$-conics, the Hilbert functor of dimension $r$ conics on $X$ is a subfunctor of the projective Grassmannian functor on $\P^n$ represented by $\mathbb{G}(r+1,n)$.  As the universal subscheme of $\P^n\times\mathbb{G}(r+1,n)$ is the tautological $\P^{r+1}$-bundle over $\mathbb{G}(r+1,n)$, this says that $\mathcal{H}\hookrightarrow\mathbb{G}(r+1,n)$ and that the universal subscheme of $X\times\mathcal{H}$ is a conic bundle over $\mathcal{H}$ of dimension $r$.  Again, for $r=0$ this is \cite[\S 3]{vermeireflip1}.


\begin{cor}\label{smoothdeficient}
Suppose $X\subset\P^n$ is a smooth, irreducible, non-degenerate variety satisfying condition $K_2$ that is covered by $r$-conics.  If $\mathcal{H}$ is smooth, then $\wts$ is smooth, hence $\operatorname{Sec}X$ is smooth off $X$.  
\nopagebreak \hfill $\Box$ \par \medskip
\end{cor}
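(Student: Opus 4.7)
The plan is to read off both conclusions directly from the structural description in Proposition~\ref{secantstructure}, which is the main content that has already been established. That proposition asserts two things: first, that $\wts = \P_{\mathcal{H}}(\varphi_*\O_{\wts}(H))$ is a $\P^{r+1}$-bundle over $\mathcal{H}$; and second, that the exceptional locus of $\pi:\wts\rightarrow\operatorname{Sec}X$ is the universal subscheme $D\subset X\times\mathcal{H}$, which in particular maps into $X\subset\operatorname{Sec}X$ under $\pi$.

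For the first conclusion, I would note that a projective bundle $\P_{\mathcal{H}}(\E)$ over a smooth scheme $\mathcal{H}$ is smooth (for instance, because it is locally isomorphic to $\mathcal{H}\times\P^{r+1}$ in the Zariski topology). So the hypothesis that $\mathcal{H}$ is smooth together with Proposition~\ref{secantstructure} gives that $\wts$ is smooth.

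For the second conclusion, I would argue as follows. The morphism $\pi:\wts\rightarrow\operatorname{Sec}X$ is the restriction to $\wts$ of the blowup map $\operatorname{Bl}_X\P^n\rightarrow\P^n$, hence is proper and birational. Because the exceptional locus $D$ maps into $X$, the restriction
\[
\pi:\wts\setminus\pi^{-1}(X)\longrightarrow\operatorname{Sec}X\setminus X
\]
is an isomorphism. Since $\wts$ is smooth, the open subset $\wts\setminus\pi^{-1}(X)$ is smooth, and therefore $\operatorname{Sec}X\setminus X$ is smooth.

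There is essentially no obstacle here: once Proposition~\ref{secantstructure} and the $K_2$-based identification of the exceptional locus are granted, the corollary is a formal consequence. The only mild point to be careful about is that $\pi^{-1}(X)$ really does agree with the exceptional locus $D$ of $\pi$ (rather than being strictly larger), which is exactly the content of the second statement of Proposition~\ref{secantstructure}.
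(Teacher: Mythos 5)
Your argument is correct and is exactly the reasoning the paper intends: the corollary is stated without proof as an immediate consequence of Proposition~\ref{secantstructure}, namely that $\wts$ is a $\P^{r+1}$-bundle over the smooth base $\mathcal{H}$ (hence smooth) and that the exceptional locus of $\pi$ lies over $X$, so $\pi$ is an isomorphism off $X$. Your added remark about checking $\pi^{-1}(X)$ equals the exceptional locus is the right point of care, and it is indeed supplied by the second assertion of Proposition~\ref{secantstructure}.
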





\textbf{Acknowledgements:} I was led revisit these questions after an exchange with J.M. Landsberg.  I would also like to thank for D. Arapura for communication regarding the Remark~\ref{hodge}.

\end{document}